\documentclass[a4paper]{article}

\usepackage{url}

\usepackage{colonequals}
\usepackage{pgfplots}

\usepackage{amsmath, amsthm, amssymb}
\theoremstyle{plain}
\newtheorem{theorem}[equation]{Theorem}

\newtheorem{lemma}[equation]{Lemma}

\theoremstyle{definition}
\newtheorem{definition}[equation]{Definition}
\newtheorem{remark}[equation]{Remark}
\newtheorem{question}[equation]{Question}

\newtheorem*{acknowledgements}{Acknowledgements}

\newtheoremstyle{dotless}{}{}{}{}{\bfseries}{}{ }{}
\theoremstyle{dotless}

\newcommand{\fakeenv}{} 

{ 
 \renewcommand{\fakeenv}{#2} 
 \theoremstyle{plain} 
 \newtheorem*{\fakeenv}{#1~\ref{#2}} 
 \begin{\fakeenv}
}
{
 \end{\fakeenv}
}

\DeclareMathOperator{\Mod}{Mod^+}
\DeclareMathOperator{\Out}{Out}

\newcommand{\lambdabar}{{\mkern0.75mu\mathchar '26\mkern -9.75mu\lambda}}

\newcommand{\calC}{\mathcal{C}}

\newcommand{\outerspace}{{CV}}
\newcommand{\FF}{\mathbb{F}}

\newcommand{\QQ}{\mathbb{Q}}

\newcommand{\ZZ}{\mathbb{Z}}
\newcommand{\RP}{\mathbb{RP}}

\newcommand{\defeq}{\colonequals}


\newcommand{\inlineand}{\quad \textrm{and} \quad}

\DeclareMathOperator{\stretchfactor}{\lambda}
\DeclareMathOperator{\spectralratio}{\omega}

\usepackage{tikz}
\usetikzlibrary{calc}
\usetikzlibrary{decorations.pathreplacing}
\usetikzlibrary{shapes.geometric}

\usepackage[pdfborderstyle={},pdfborder={0 0 0},pagebackref,pdftex]{hyperref}

\renewcommand*{\backref}[1]{}
\renewcommand*{\backrefalt}[4]{
  \ifcase #1 %
   [No citations.]%
  \or
   [#2]%
  \else
   [#2]%
  \fi
}

\author{Mark C. Bell\\ University of Illinois\\ \texttt{mcbell@illinois.edu}}

\title{Asymmetric dynamics of outer automorphisms}

\begin{document}

\maketitle

\begin{abstract}
We consider the action of an irreducible outer automorphism $\phi$ on the closure of Culler--Vogtmann Outer space.
This action has north-south dynamics and so, under iteration, points converge exponentially to $[T^\phi_+]$.

For each $N \geq 3$, we give a family of outer automorphisms $\phi_k \in \Out(\FF_N)$ such that as, $k$ goes to infinity, the rate of convergence of $\phi_k$ goes to infinity while the rate of convergence of $\phi_k^{-1}$ goes to one.
Even if we only require the rate of convergence of $\phi_k$ to remain bounded away from one, no such family can be constructed when $N < 3$.

This family also provides an explicit example of a property described by Handel and Mosher: that there is no uniform upper bound on the distance between the axes of an automorphism and its inverse.
\end{abstract}

\keywords{Outer automorphism, rate of convergence, asymmetric.}

\ccode{37E25, 20E05, 37D20}

\bigskip \bigskip

An irreducible outer automorphism $\phi \in \Out(\FF_N)$ acts on $\overline{\outerspace_N}$, the closure of Culler--Vogtmann Outer space \cite[Section~1.4]{Vogtmann}, with north-south dynamics \cite[Theorem~1.1]{LevittLustig}.
Therefore its action has a pair of fixed points $[T^\phi_+], [T^\phi_-] \in \partial \outerspace_N$ and under iteration points in $\overline{\outerspace_N}$ (other than $[T^\phi_-]$) converge to $[T^\phi_+]$.

There is a natural embedding of $\overline{\outerspace_N}$ into $\RP^\calC$, where $\calC$ is the set of conjugacy classes of the free group $\FF_N$.
In this embedding, the $w \in \calC$ coordinate of a marked graph $(G, g) \in \outerspace_N$ is given by the length of the shortest loop in $G$ which is freely homotopic to $g(w)$ \cite[Page~5]{Vogtmann}.
In this coordinate system the convergence to $[T^\phi_+]$ is exponential.
To measure the rate of this convergence we recall two definitions from \cite{BellSchleimer}.

\begin{definition}[{\cite[Definition~1.2]{BellSchleimer}}]
Suppose that $f \in \ZZ[x]$ is a polynomial with roots $\lambda_1, \ldots, \lambda_m$, ordered such that $|\lambda_1| \geq |\lambda_2| \geq \cdots \geq |\lambda_m|$. The \emph{spectral ratio} of $f$ is $\spectralratio(f) \defeq |\lambda_1 / \lambda_2|$.
\end{definition}

\begin{definition}
Suppose that $\phi \in \Out(\FF_N)$ is an irreducible outer automorphism.
Let $\mu_{\stretchfactor(\phi)} \in \ZZ[x]$ denote the minimal polynomial of its stretch factor $\stretchfactor(\phi)$.
The \emph{spectral ratio} of $\phi$ is $\spectralratio(\phi) \defeq \spectralratio(\mu_{\stretchfactor(\phi)})$.
\end{definition}

As described in \cite{BellSchleimer}, the rate of convergence of $[T \cdot \phi^n]$ to $[T^\phi_+]$ is determined by $\spectralratio(\phi)$.
Thus we state the main result of this paper, a complete characterisation of when it is possible to build outer automorphisms which converge rapidly in one direct but slowly in the other, in terms of the spectral ratio:

\begin{theorem}
\label{thrm:main}
There is a family of fully irreducible outer automorphisms $\phi_k \in \Out(\FF_N)$ such that $\spectralratio(\phi_k) \to \infty$ but $\spectralratio(\phi_k^{-1}) \to 1$ if and only if $N \geq 3$.
\end{theorem}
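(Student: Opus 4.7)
The theorem splits into a necessity direction (for $N \leq 2$) and a sufficiency direction (for $N \geq 3$), which I would handle separately.

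For necessity the case $N = 1$ is vacuous since $\Out(\FF_1)$ contains no fully irreducible elements. For $N = 2$ I would use the fact that every fully irreducible $\phi \in \Out(\FF_2)$ is geometric: it is induced by a pseudo-Anosov of the once-punctured torus, so $\stretchfactor(\phi)$ equals the spectral radius of its abelianization $M_\phi \in \GL(2,\ZZ)$. The minimal polynomial of $\stretchfactor(\phi) = \lambda$ is then the quadratic characteristic polynomial $x^2 - \mathrm{tr}(M_\phi)\, x \pm 1$, with roots $\lambda$ and $\pm 1/\lambda$, so $\spectralratio(\phi) = \lambda^2$. Since $M_\phi$ and $M_\phi^{-1}$ have the same spectral radius, $\stretchfactor(\phi^{-1}) = \stretchfactor(\phi)$ and therefore $\spectralratio(\phi^{-1}) = \spectralratio(\phi)$. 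The two rates are forced to coincide, ruling out any asymmetric family in $\Out(\FF_2)$ (even in the weaker sense mentioned in the abstract).

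For sufficiency with $N = 3$, the plan is to exhibit an explicit parameterized train track map $f_k$ on a three-edge graph, defining $\phi_k \in \Out(\FF_3)$, whose $3 \times 3$ transition matrix $M_k$ has determinant $\pm 1$ and irreducible characteristic polynomial $p_k(x) = x^3 - a_k x^2 + b_k x \mp 1$ with three distinct real roots $\lambda_1(k) > |\lambda_2(k)| > |\lambda_3(k)|$. Irreducibility makes $p_k$ the minimal polynomial of $\stretchfactor(\phi_k)$, so $\spectralratio(\phi_k) = \lambda_1(k)/|\lambda_2(k)|$. The coefficients $a_k, b_k$ would be tuned so that, subject to the constraint $\lambda_1 \lambda_2 \lambda_3 = \pm 1$, a perturbative analysis yields $\lambda_1(k) \to \infty$ and $|\lambda_2(k)|/|\lambda_3(k)| \to 1$. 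Separately I would compute a train track representative for $\phi_k^{-1}$ and verify that the minimal polynomial of $\stretchfactor(\phi_k^{-1})$ is the reciprocal polynomial of $p_k$, whose top two roots are $1/\lambda_3(k)$ and $1/\lambda_2(k)$, giving $\spectralratio(\phi_k^{-1}) = |\lambda_2(k)|/|\lambda_3(k)| \to 1$. For $N > 3$ I would extend the construction to an $N$-edge graph by localizing the $k$-dependence to three edges while introducing just enough additional mixing to keep the result fully irreducible without disturbing the leading-order spectral asymptotics.

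The chief obstacle is verifying \emph{full irreducibility} of $\phi_k$, not merely Perron-ness of $M_k$. I would rely on the Bestvina--Handel train track criterion: some positive power of $M_k$ must be strictly positive, $f_k$ must admit no periodic Nielsen paths, and no power of $\phi_k$ may preserve the conjugacy class of a proper free factor. A secondary technical difficulty is that the train track representative of $\phi_k^{-1}$ typically lives on a different graph from that of $\phi_k$, and its transition matrix is \emph{not} $M_k^{-1}$; the reciprocal structure of the minimal polynomial therefore has to be confirmed by an independent train track computation rather than inferred from $p_k$ alone. Finally, the extension to $N > 3$ requires genuine mixing of new and old edges, since a naive extension by the identity on extra generators would destroy full irreducibility.
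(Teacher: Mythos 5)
Your necessity argument is fine and in fact a touch cleaner than the paper's for the theorem as literally stated. The paper instead invokes the sharp lower bound $\spectralratio(\phi) \geq \varphi^4$ for pseudo-Anosovs on the once-punctured torus; you use only that geometric automorphisms of $\FF_2$ satisfy $\spectralratio(\phi) = \spectralratio(\phi^{-1})$, which already rules out the desired family. Either works.

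The sufficiency direction, however, rests on a structural assumption that is wrong, and it is wrong precisely at the point where the interesting phenomenon lives. You plan to arrange that the minimal polynomial of $\stretchfactor(\phi_k^{-1})$ is the reciprocal polynomial of $p_k = \chi_{M_k}$, so that the spectral data of $\phi_k^{-1}$ is read off from the roots of $p_k$. But for a non-geometric fully irreducible automorphism, the train track transition matrix of $\phi_k^{-1}$ is not $M_k^{-1}$ (you note this), and its characteristic polynomial is generically \emph{not} the reciprocal of $\chi_{M_k}$ either; it is an essentially independent nonnegative Perron matrix coming from a separate train track computation. If the minimal polynomial of $\stretchfactor(\phi_k^{-1})$ really were the reciprocal of $p_k$, then $\stretchfactor(\phi_k)$ and $\stretchfactor(\phi_k^{-1})$ would be tied together as Galois conjugates in a way that is characteristic of the geometric case and works against you: the whole mechanism here is that the two transition matrices are \emph{unrelated}, and this independence is what allows $\lambda(\phi_k) \approx k$ while $\lambda(\phi_k^{-1}) \approx \sqrt{k}$. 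Concretely, for the automorphism $a_N \mapsto \cdots \mapsto a_1 \mapsto a_1^k a_N$ on the $N$-rose, $\phi_k$ has train track transition matrix with characteristic polynomial $x^N - kx^{N-1} - 1$, while $\phi_k^{-1}$ is again a train track map on the rose with characteristic polynomial $x^N - kx^{N-2} - 1$ -- not the reciprocal $\pm(x^N + kx - 1)$. The inverse's transition matrix is the entrywise absolute value of $M_k^{-1}$, which scrambles the spectrum. A Rouch\'e argument then locates the roots of each polynomial independently: the first has one root near $k$ and $N-1$ roots inside the unit circle, giving $\spectralratio(\phi_k) \geq k$; the second has two roots near $\pm\sqrt{k}$ and $N-2$ roots inside the unit circle, giving $\spectralratio(\phi_k^{-1}) \leq \sqrt{(k+1)/(k-1)} \to 1$. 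Nothing reciprocal is going on.

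Two smaller gaps: you need a mechanism to guarantee both characteristic polynomials are irreducible over $\QQ$ for infinitely many $k$ (the paper uses Hilbert irreducibility, supplemented by direct arguments); and your plan for $N > 3$ (``localize the $k$-dependence to three edges and add just enough mixing'') is speculative, whereas a single cyclic construction works uniformly for all $N \geq 3$ and makes the $N$-dependence of the two characteristic polynomials completely transparent.
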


This gives another difference between irreducible outer automorphisms and pseudo-Anosov mapping classes of surfaces.
A pseudo-Anosov mapping class $h \in \Mod(S)$ has its spectral ratio $\spectralratio(h)$ defined in terms of its dilatation \cite[Definition~1.3]{BellSchleimer}.
However, $h$ and $h^{-1}$ have the same dilatation \cite[Proposition~11.3]{FarbMargalit} and so $\spectralratio(h) = \spectralratio(h^{-1})$ automatically.

It is straightforward to prove the forward direction of Theorem~\ref{thrm:main} by considering its contrapositive:
\begin{itemize}
\item When $N = 1$ there is nothing to check as all automorphisms of $\FF_1$ are finite order.
\item When $N = 2$, any irreducible outer automorphism $\phi \in \Out(\FF_2)$ is geometric.
Hence there is a pseudo-Anosov mapping class on $S_{1,1}$, the once-punctured torus, which induces $\phi$ on $\pi_1(S_{1,1})$.

However, the spectral ratio of any pseudo-Anosov mapping class of the once-punctured torus is at least $\varphi^4 = 6.854101\cdots$, where $\varphi$ is the golden ratio \cite[Section~3.3]{BellSchleimer}.
Therefore $\spectralratio(\phi) = \spectralratio(\phi^{-1}) \geq \varphi^4$ and so these are both bounded away from one.
\end{itemize}
Thus we devote the remainder of this paper to constructing an explicit family of outer automorphisms of $\FF_N = \langle a_1, a_2, \ldots, a_N \rangle$, for fixed $N \geq 3$.

To do this, we start by considering the polynomials
\[ f(x,y) \defeq x^N - yx^{N-1} - 1 \inlineand g(x, y) \defeq x^N - yx^{N-2} - 1 \]
in $\QQ[x, y]$.
Since these are linear polynomials in $y$, they are irreducible over $\QQ$.
Therefore, by Hilbert's irreducibility theorem \cite[Chapter~9]{Lang}, there are infinitely many integers $k \geq 3$ such that $p_k(x) \defeq f(x, k)$ and $q_k(x) \defeq g(x, k)$ are both irreducible.

\begin{lemma}
\label{lem:root_pos}
Suppose that $k \geq 3$.
The polynomial $p_k$ has $N-1$ roots inside of the unit circle and one root in $[k, k+1]$.
Similarly, the polynomial $q_k$ has $N-2$ roots inside of the unit circle, one root in $[-\sqrt{k+1}, -\sqrt{k-1}]$ and one root in $[\sqrt{k-1}, \sqrt{k+1}]$.
For example, see Figure~\ref{fig:polynomials}.
\end{lemma}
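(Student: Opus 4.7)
The plan is to count roots inside the open unit disk via Rouché's theorem and then to locate the remaining roots on the real axis via the intermediate value theorem. The key estimate for both applications of Rouché's theorem is that on the unit circle $|x^N - 1| \leq 2 < 3 \leq k$, so either monomial $-kx^{N-1}$ or $-kx^{N-2}$ dominates the rest of the corresponding polynomial.

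For $p_k(x) = -kx^{N-1} + (x^N - 1)$, this comparison gives $N-1$ zeros inside the open unit disk, so exactly one root lies outside the closed unit disk. Using the factorisation $p_k(x) = x^{N-1}(x - k) - 1$ one has $p_k(k) = -1 < 0$ and $p_k(k+1) = (k+1)^{N-1} - 1 > 0$, placing the remaining root in $[k, k+1]$.

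For $q_k(x) = -kx^{N-2} + (x^N - 1)$, the analogous Rouché comparison gives $N-2$ zeros inside the open unit disk, leaving two roots outside. Because $k \geq 3$, both intervals $[-\sqrt{k+1}, -\sqrt{k-1}]$ and $[\sqrt{k-1}, \sqrt{k+1}]$ are disjoint from the closed unit disk, so it suffices to exhibit one real root in each. Using the factorisation $q_k(x) = x^{N-2}(x^2 - k) - 1$, the evaluations
\[ q_k(\pm\sqrt{k-1}) = -(\pm\sqrt{k-1})^{N-2} - 1 \inlineand q_k(\pm\sqrt{k+1}) = (\pm\sqrt{k+1})^{N-2} - 1 \]
immediately yield $q_k(\sqrt{k-1}) < 0 < q_k(\sqrt{k+1})$, producing a root in $[\sqrt{k-1}, \sqrt{k+1}]$. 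At the negative arguments both expressions pick up the same factor $(-1)^{N-2}$, so $q_k(-\sqrt{k-1})$ and $q_k(-\sqrt{k+1})$ are again of opposite sign regardless of the parity of $N$, yielding the final real root in $[-\sqrt{k+1}, -\sqrt{k-1}]$.

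The argument is essentially a textbook application of Rouché's theorem together with the intermediate value theorem; I do not anticipate a genuine obstacle. The only piece of bookkeeping worth singling out is the sign check at the negative arguments for $q_k$, which is cleaned up by a one-line observation on the parity of $N - 2$.
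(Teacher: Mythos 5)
Your proof is correct and follows the same strategy as the paper's: Rouch\'e's theorem against the dominant monomial ($-kx^{N-1}$ for $p_k$, $-kx^{N-2}$ for $q_k$) counts the roots inside the unit circle, and the intermediate value theorem then locates the remaining real roots, with your version simply spelling out the sign evaluations that the paper leaves implicit. The one detail still suppressed at the negative endpoints for $q_k$ when $N$ is odd is that $(\sqrt{k-1})^{N-2} > 1$, which does hold since $k \geq 3$ gives $\sqrt{k-1} > 1$.
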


\begin{proof}
Let $h(z) \defeq -k z^{N-1}$.
Then when $|z| = 1$ we have that
\[ |(p_k-h)(z)| = |z^N - 1| \leq 2 < |p_k(z)| + |h(z)|. \]
Therefore, by Rouch\'{e}'s theorem, $p_k$ must have the same number of roots inside of the unit circle as $h$ does.
Hence $p_k$ has $N-1$ roots inside of the unit circle.

Furthermore, since $\deg(p_k) = N$, there is only one more root to locate.
The intermediate value theorem now shows it must lie in $[k, k+1]$.

Applying the same argument with $h(z) \defeq -k z^{N-2}$ to $q_k$ shows it must have $N-2$ roots inside of the unit circle.
Again, we verify that the other two roots of $q_k$ lie in $[-\sqrt{k+1}, -\sqrt{k-1}]$ and $[\sqrt{k-1}, \sqrt{k+1}]$ by using the intermediate value theorem.
\end{proof}

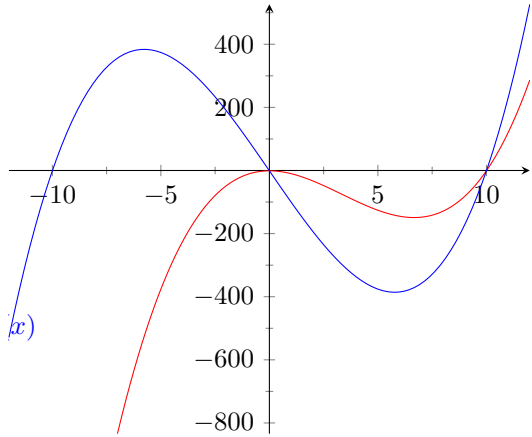
\begin{figure}[ht]
\centering
\begin{tikzpicture}
  \begin{axis}[minor tick num=1, axis lines=middle, label style={at={(ticklabel cs:1.1)}}]
    \addplot[red, domain=-7:12, samples=100]{x^3 - 10*x^2 - 1} node[xshift=-15pt, pos=0.25] {$f_{10}(x)$} ;
    \addplot[blue, domain=-12:12, samples=100]{x^3 - 100*x - 1} node[xshift=-5pt, yshift=40pt, pos=0.75] {$g_{100}(x)$} ;
  \end{axis}
\end{tikzpicture}
\caption{The polynomials $p_{10}$ and $q_{100}$ when $N = 3$.}
\label{fig:polynomials}
\end{figure}

In fact knowing the positions of the roots allows us to show many of these polynomials are irreducible directly.

\begin{lemma}
Whenever $k \geq 3$, the polynomial $p_k$ is irreducible over $\ZZ$.
\end{lemma}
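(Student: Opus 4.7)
The plan is to use the root location information from Lemma~\ref{lem:root_pos}, combined with the fact that $p_k(0) = -1$, to rule out any nontrivial factorization directly.

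Suppose for contradiction that $p_k = AB$ for some nonconstant $A, B \in \ZZ[x]$. Since $p_k$ is monic we may assume $A$ and $B$ are monic, and since $p_k(0) = -1$ we have $A(0)B(0) = -1$, so in particular $|A(0)| = |B(0)| = 1$. Recall that for a monic polynomial, the absolute value of the constant term equals the product of the absolute values of its roots.

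Next I would use the root locations. By Lemma~\ref{lem:root_pos}, the polynomial $p_k$ has one real root $\lambda \in [k, k+1]$ and $N-1$ roots inside the unit circle. The large root $\lambda$ appears as a root of exactly one factor, say $A$, and then every root of $B$ lies strictly inside the unit disk. This would force
\[ |B(0)| = \prod_{\beta : B(\beta) = 0} |\beta| < 1, \]
contradicting $|B(0)| = 1$. So $B$ must be constant, and $p_k$ is irreducible.

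The only subtle point is to make sure that the ``inside the unit circle'' roots from Lemma~\ref{lem:root_pos} are \emph{strictly} inside, i.e.\ that $p_k$ has no roots on $|z|=1$; otherwise the product could equal $1$ and the argument fails. I would verify this with a quick direct estimate: for $k \geq 3$ and $|z| = 1$, the triangle inequality gives
\[ |p_k(z)| = |kz^{N-1} - z^N + 1| \ \geq\ k - |z|^N - 1 \ =\ k - 2 \ >\ 0, \]
ruling out unit-modulus roots. (Alternatively, the Rouch\'{e} estimate used in the proof of Lemma~\ref{lem:root_pos} was already strict, which suffices to conclude the $N-1$ interior roots lie strictly inside.) Assembling these pieces gives the desired contradiction; the main obstacle is really just confirming the strictness of the root bound, after which the constant-term argument finishes the proof immediately.
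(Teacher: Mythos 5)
Your proof is correct and follows essentially the same strategy as the paper's: first rule out roots of $p_k$ on the unit circle (the paper does this via $|z-k|=1 \Rightarrow k \leq 2$, you via a direct lower bound $|p_k(z)| \geq k-2 > 0$, both fine), then observe that the factor not containing the large real root would have all roots strictly inside the unit disk, forcing its constant term to have modulus less than one, contradicting $|p_k(0)| = 1$.
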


\begin{proof}
First assume that there is a $z$ such that $|z| = 1$ and $p_k(z) = 0$ and so $1 = |z^N - k z^{N-1}| = |z - k|$.
However this would mean that $|k| \leq |k - z| + |z| = 2$ which contradicts the fact that $k \geq 3$.
Hence $p_k$ cannot have a root on the unit circle.

Now assume that $p_k$ is reducible.
Then one of its factors, $q \in \ZZ[x]$, must have all of its roots inside of the unit circle.
Therefore the constant term of $q$, which is the product of its roots, must have modulus less than one.
However this means that the constant term of $q$ must be zero and so zero is a root of $p_k$, which is false.
\end{proof}

Similarly, by taking into account the symmetry of the roots, the same argument also shows that $q_k$ is irreducible whenever $k \geq 3$ and $N$ is even.
Furthermore, for many of the low, odd values of $N$ we can find a prime $p$ and $k' \in \ZZ / p\ZZ$ such that the image of $q_{k'}$ in $(\ZZ / p \ZZ)[x]$ is irreducible.
It then follows that $q_k$ is irreducible whenever $k \equiv k' \pmod{p}$.
Some of these values are shown in Table~\ref{tab:irreducible}.

\begin{table}[ht]
\centering
\begin{tabular}{c|cccccccccccccc}
$N$ & 3 & 5 & 7 & 9 & 11 & 13 & 15 & 17 & 19 & 21 & 23 & 25 & 27 & 29 \\ \hline
$p$ & 2 & 2 & 3 & 3 & 2 & 7 & 3 & 7 & 3 & 2 & 5 & 5 & 3 & 2 \\
$k'$ & 1 & 1 & 1 & 0 & 1 & 4 & 1 & 4 & 1 & 1 & 4 & 0 & 0 & 1
\end{tabular}
\caption{$q_k$ is irreducible whenever $k \equiv k' \pmod{p}$.}
\label{tab:irreducible}
\end{table}

Now, following \cite[Page~3154]{HandelMosher}, let $\phi_k \in \Out(\FF_N)$ be the outer automorphism given by:
\[ a_N \mapsto a_{N-1} \mapsto a_{N-2} \mapsto \cdots \mapsto a_2 \mapsto a_1 \mapsto a_1^k a_N. \]
We use this family to conclude the remaining direction of Theorem~\ref{thrm:main}:

\begin{theorem}
\label{thrm:spectral_ratio_higher}
Suppose that $k \geq 3$ is such that $p_k$ and $q_k$ are both irreducible.
The outer automorphisms $\phi_k$ and $\phi_k^{-1}$ are non-geometric and fully irreducible.
Furthermore, $\spectralratio(\phi_k) \geq k$ while $\spectralratio(\phi_k^{-1}) < 1 + 1 / \sqrt{k}$.
\end{theorem}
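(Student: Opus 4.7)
The plan is to exhibit, for each of $\phi_k$ and $\phi_k^{-1}$, a train track representative whose transition matrix has characteristic polynomial $p_k$ or $q_k$ respectively. The spectral ratio bounds then follow directly from Lemma~\ref{lem:root_pos}, and the non-geometric conclusion from the mismatch of the resulting stretch factors; full irreducibility of this family is already established in \cite{HandelMosher}.

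For $\phi_k$ itself I would check directly that the defining formula gives a train track on the rose $R_N$ with petals $a_1, \ldots, a_N$. At the unique vertex the derivative $Df$ fixes $a_1^+$ while the other positive germs flow into it, and the negative germs cycle through $a_1^-, a_N^-, a_{N-1}^-, \ldots, a_2^-$; so there are exactly two gates, and every turn appearing in the image word $\phi_k(a_1) = a_1^k a_N$ mixes them. The corresponding transition matrix $M$ has $k$ in position $(1,1)$, entries $1$ in positions $(i-1, i)$ for $2 \leq i \leq N$, and a further~$1$ in position $(N, 1)$; cofactor expansion along the last row yields $\det(xI - M) = p_k(x)$. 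Since $p_k$ is irreducible by assumption, $\stretchfactor(\phi_k)$ is the Perron--Frobenius root of $p_k$, which by Lemma~\ref{lem:root_pos} lies in $[k, k+1]$ while all other roots lie inside the open unit disk. Hence $\spectralratio(\phi_k) \geq k$.

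The inverse is the principal obstacle, because the naive rose representative $a_i \mapsto a_{i+1}$ for $i < N$ and $a_N \mapsto a_2^{-k} a_1$ collapses every germ at the vertex into a single gate and so fails badly to be a train track. I would invoke the construction in \cite{HandelMosher} to produce a train track representative of $\phi_k^{-1}$ on a suitable marked graph (obtained by subdividing and folding from the rose) and verify that its transition matrix has characteristic polynomial $q_k(x)$; this verification is the main technical step of the argument. Lemma~\ref{lem:root_pos} then locates $\stretchfactor(\phi_k^{-1}) \in [\sqrt{k-1}, \sqrt{k+1}]$ with sub-dominant eigenvalue in $[-\sqrt{k+1}, -\sqrt{k-1}]$, so
\[ \spectralratio(\phi_k^{-1}) \leq \sqrt{(k+1)/(k-1)} < 1 + 1/\sqrt{k}, \]
where the last inequality follows by squaring and using $k - 1 \geq \sqrt{k}$, valid for $k \geq 3$. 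Finally, neither $\phi_k$ nor $\phi_k^{-1}$ can be geometric, since a fully irreducible geometric outer automorphism is induced by a pseudo-Anosov and so shares its stretch factor with its inverse, contradicting $\stretchfactor(\phi_k) \geq k > \sqrt{k+1} \geq \stretchfactor(\phi_k^{-1})$.
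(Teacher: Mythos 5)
Your overall skeleton---exhibit train track representatives on the rose, read off characteristic polynomials, apply Lemma~\ref{lem:root_pos}, and deduce non-geometricity from the mismatch of stretch factors---matches the paper. The treatment of $\phi_k$ itself is fine. However, there is a real gap in your treatment of the inverse, and it sits exactly at what you yourself call ``the main technical step.''

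You assert that the rose map $g \colon a_i \mapsto a_{i+1}$ ($i<N$), $a_N \mapsto a_2^{-k} a_1$ ``collapses every germ at the vertex into a single gate and so fails badly to be a train track,'' and on this basis you outsource the construction to \cite{HandelMosher} without carrying out the verification. But this assertion is false: computing $Dg$ one sees that the only collision is $Dg(a_N) = Dg(a_1^{-1}) = a_2^{-1}$, so the gates are the $N$ pairs $\{a_1, a_2^{-1}\}, \{a_2, a_3^{-1}\}, \ldots, \{a_N, a_1^{-1}\}$, and the turns crossed by $g(a_N)$, namely $\{a_2, a_2^{-1}\}$ and $\{a_1, a_2\}$, are both legal. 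The rose map \emph{is} a train track map, which is precisely what the paper claims (``Direct calculation of the turns involved shows that this is again a train track map''). So the obstacle you invoke does not exist, and the proposal never actually establishes the key identity between a transition matrix for $\phi_k^{-1}$ and the polynomial $q_k$, nor the resulting spectral-ratio bound. Without that computation the theorem is not proved; you cannot simply cite \cite{HandelMosher} for it.

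Two smaller points. First, note that your formula $\phi_k^{-1}(a_N) = a_2^{-k} a_1$ disagrees with the formula $a_N \mapsto a_{N-1}^{-k} a_1$ printed in the paper; the two coincide only when $N = 3$, and you should reconcile this with the displayed matrix $B$ and the claim that its characteristic polynomial is $q_k = x^N - kx^{N-2} - 1$ rather than $x^N - kx - 1$ before relying on Lemma~\ref{lem:root_pos}. Second, the paper does not take full irreducibility of $\phi_k$ and $\phi_k^{-1}$ as known from \cite{HandelMosher}: it proves it via Pfaff's Full Irreducibility Criterion, splitting into cases according to whether the train track map has a periodic Nielsen path and using \cite{HandelMosherParageometric} to rule out both $\phi_k$ and $\phi_k^{-1}$ being parageometric. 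Your one-line appeal to \cite{HandelMosher} leaves this part unjustified as well.
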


\begin{proof}
We interpret $\phi_k$ as a piecewise-linear homotopy equivalence $f$ of the $N$--petalled rose.
Since this is a positive automorphism, this is a train track map \cite{BestvinaHandel}.
The transition matrix of this map is:
\[ A \defeq \left(\begin{array}{cccccc}
k & 0 & 0 & \hdots & 0 & 1 \\
1 & 0 & 0 & \hdots & 0 & 0 \\
0 & 1 & 0 & \hdots & 0 & 0 \\
0 & 0 & 1 & \hdots & 0 & 0 \\
\vdots & \vdots & \vdots & \ddots & \vdots & \vdots \\
0 & 0 & 0 & \hdots & 1 & 0
\end{array} \right) \]
and, up to a sign, the characteristic polynomial of $A$ is
\[ x^N - k x^{N-1} - 1 = p_k(x). \]
We chose $k$ so that this polynomial is irreducible and so $\spectralratio(\phi_k) = \spectralratio(p_k)$.
Now by Lemma~\ref{lem:root_pos} this polynomial has $N-1$ roots that lie in the interior of the unit circle and one root in $[k, k+1]$.
Hence $\stretchfactor(\phi_k) \approx k$ and $\spectralratio(\phi_k) = \spectralratio(p_k) \geq k$.

On the other hand, $\phi_k^{-1}$ is given by:
\[ a_1 \mapsto a_2 \mapsto a_3 \mapsto \cdots \mapsto a_{N-1} \mapsto a_N \mapsto a_{N-1}^{-k} a_1. \]
Again, we consider this as a piecewise-linear homotopy equivalence $g$ of the $N$--petalled rose.
Direct calculation of the turns involved shows that this is again a train track map and that its transition matrix is:
\[ B \defeq \left(\begin{array}{cccccc}
0 & 1 & 0 & \hdots & 0 & 0 \\
0 & 0 & 1 & \hdots & 0 & 0 \\
0 & 0 & 0 & \hdots & 0 & 0 \\
\vdots & \vdots & \vdots & \ddots & \vdots & \vdots \\
0 & 0 & 0 & \hdots & 0 & 1 \\
1 & 0 & 0 & \hdots & k & 0
\end{array} \right) \]
Up to a sign, the characteristic polynomial of $B$ is
\[ x^N - k x^{N-2} - 1 = q_k(x). \]
Again, we chose $k$ so that this polynomial is irreducible and so $\spectralratio(\phi_k) = \spectralratio(q_k)$.
Thus by Lemma~\ref{lem:root_pos} this polynomial has $N-2$ roots that lie in the interior of the unit circle, one root in $[-\sqrt{k+1}, -\sqrt{k-1}]$ and one root in $[\sqrt{k-1}, \sqrt{k+1}]$.
Thus, $\stretchfactor(\phi_k) \approx \sqrt{k}$ and so
\[ \spectralratio(\phi_k^{-1}) = \spectralratio(q_k) \leq \frac{\sqrt{k + 1}}{\sqrt{k - 1}} \leq 1 + \frac{1}{\sqrt{k}} \]
as required.

It also follows from this computation that $\stretchfactor(\phi_k) \neq \stretchfactor(\phi_k^{-1})$ and so $\phi_k$ is a non-geometric automorphism.

Now suppose that $f$ has no periodic Nielsen paths \cite[Page~25]{Pfaff}.
Note that the local Whitehead graph of $f$ is connected \cite[Page~25]{Pfaff} and $A$ is a Perron--Frobenious matrix.
Thus by the Full Irreducibility Criterion \cite[Lemma 9.9]{Pfaff} we have that $\phi_k$ is fully irreducible.

On the other hand, if $f$ has a periodic Nielsen path then $\phi_k$ must be parageometric \cite[Page~3155]{HandelMosher}.
Thus $\phi_k^{-1}$ is not parageometric \cite[Corollary~2]{HandelMosherParageometric} and so $g$ has no periodic Nielsen paths.
Again, the local Whitehead graph of $g$ is connected and $B$ is a Perron--Frobenious matrix, hence $\phi_k^{-1}$ is fully irreducible.

In either case, $\phi_k$ and $\phi_k^{-1}$ are both fully irreducible.
\end{proof}

Handel and Mosher described how there is no uniform upper bound, depending only on $N$, on the distance between the axes of $\phi$ and $\phi^{-1}$ \cite[Section~7.1]{HandelMosherAxes}.
We finish by noting that $\phi_k$ is an explicit family of such outer automorphisms.

\begin{theorem}
Suppose that $k \geq 5$ is such that $p_k$ and $q_k$ are both irreducible.
There are axes for $\phi_k$ and $\phi_k^{-1}$ which are separated by at least $\left(N-\frac{5}{2} \right) \log(k) - 6$.
\end{theorem}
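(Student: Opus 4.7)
The plan is to choose explicit axes $L$ for $\phi_k$ and $L'$ for $\phi_k^{-1}$ passing through the Perron--Frobenius train-track roses $X_0$ and $Y_0$, and to lower-bound their Lipschitz separation via a triangle inequality that exploits the severe asymmetry $d(\phi_k X, X) \gg d(X, \phi_k X)$ along $L$, stemming from the fact that the stretch factors $\lambda \defeq \stretchfactor(\phi_k) \in [k, k+1]$ and $\mu \defeq \stretchfactor(\phi_k^{-1}) \in [\sqrt{k-1}, \sqrt{k+1}]$ are incomparable.

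First I would let $X_0$ be the rose with edge lengths $v_i = v_1 \lambda^{-(i-1)}$ given by the right Perron--Frobenius eigenvector of $A$, normalised to unit volume. Then $\sum_i v_i = 1$ forces $v_1 \geq 1 - 1/\lambda \geq 1 - 1/k$, and $v_N \leq \lambda^{-(N-1)} \leq k^{-(N-1)}$. The positive map $f$ from the proof of Theorem~\ref{thrm:spectral_ratio_higher} is a train track for $\phi_k$ at $X_0$, so $X_0$ lies on an axis $L$ of $\phi_k$; define $Y_0$ and $L'$ analogously using $B$ and $\phi_k^{-1}$.

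The key estimate is $d(\phi_k X_0, X_0) \geq (N-1) \log k$. To prove this, take the witness loop $\alpha \defeq \phi_k^{-1}(a_N) = a_2^{-k} a_1$. Then $\ell_{X_0}(\alpha) = k v_2 + v_1 = v_1(k/\lambda + 1) \geq 1$, while $\phi_k(\alpha) = a_N$ gives $\ell_{\phi_k X_0}(\alpha) = \ell_{X_0}(a_N) = v_N \leq k^{-(N-1)}$, so the length ratio of $\alpha$ is at least $k^{N-1}$, providing the claimed lower bound on the sup-of-ratios defining the Lipschitz distance. Dually, since $Y_0$ lies on $L'$, the $\phi_k$-isometry of the metric and Lemma~\ref{lem:root_pos} give
\[ d(\phi_k Y_0, Y_0) \;=\; d(Y_0, \phi_k^{-1} Y_0) \;=\; \log \mu \;\leq\; \tfrac12 \log(k+1). \]
Combining these via the triangle inequality $d(\phi_k X_0, X_0) \leq d(\phi_k X_0, \phi_k Y_0) + d(\phi_k Y_0, Y_0) + d(Y_0, X_0)$ and absorbing $d(\phi_k X_0, \phi_k Y_0) = d(X_0, Y_0)$ via the same isometry yields
\[ d(X_0, Y_0) + d(Y_0, X_0) \;\geq\; (N-1) \log k - \tfrac12 \log(k+1) \;\geq\; \left(N - \tfrac52\right)\log k - 6, \]
where the final inequality reduces to $\tfrac32 \log k + 6 \geq \tfrac12 \log(k+1)$, which is trivially valid for $k \geq 1$.

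By $\phi_k$-equivariance, the same argument applies to any pair $(\phi_k^m X_0, \phi_k^n Y_0)$ on $L \times L'$, since the shifts absorb into an isometric translation of the triangle by $\phi_k^{m+n}$. Hence every pair of points on the axes satisfies the combined-distance bound, giving the claimed separation. The main obstacle is the bookkeeping of constants: one must verify that the approximations $\lambda \leq k+1$ and $\mu \leq \sqrt{k+1}$ from Lemma~\ref{lem:root_pos} feed cleanly through the normalisation of $X_0$ and the triangle inequality so that the explicit bound $(N - \tfrac52)\log k - 6$ emerges, and that the large lower bound on $d(\phi_k X, X)$ propagates uniformly over $L$ (which follows at once from $\phi_k$-equivariance).
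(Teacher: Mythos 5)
Your approach is genuinely different from the paper's and the central idea is nice: instead of directly estimating $d(X_i, Y_j)$ by evaluating candidate loops (which is what the paper does), you lower-bound the ``backtracking'' distance $d(\phi_k X_0, X_0)$ via a single witness loop $\alpha = \phi_k^{-1}(a_N)$, then feed that through the asymmetric triangle inequality and the $\Out(\FF_N)$-isometry to extract a separation estimate. The witness computation is correct: with $X_0$ the unit-volume Perron--Frobenius rose you do get $\ell_{X_0}(\alpha) \geq 1$ and $\ell_{X_0\cdot\phi_k}(\alpha) = \ell_{X_0}(a_N) = v_N \leq k^{-(N-1)}$, so $d(\phi_k X_0, X_0) \geq (N-1)\log k$.

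However, there are two genuine gaps. First, the quantity you end up bounding is the \emph{sum} $d(X_0,Y_0)+d(Y_0,X_0)$, because the triangle inequality $d(\phi_k X_0, X_0) \leq d(X_0,Y_0) + d(\phi_k Y_0,Y_0) + d(Y_0,X_0)$ produces both directed distances and you cannot split them apart. A lower bound on the sum is strictly weaker than the paper's conclusion, which bounds the one-directional distance $d(X,Y)$ from below for \emph{every} $X$ on the $\phi_k$-axis and $Y$ on the $\phi_k^{-1}$-axis; in the asymmetric Lipschitz metric one of $d(X,Y)$, $d(Y,X)$ could in principle be small even when their sum is large, so ``separated by at least $D$'' in the sense the paper proves does not follow. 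Second, your equivariance remark only transports the estimate to the lattice $\{(\phi_k^m X_0, \phi_k^n Y_0)\}$, because the witness-loop argument relies on the specific Perron--Frobenius edge lengths at $X_0$ and there is no analogous loop at a generic point of the axis. The paper closes exactly this gap by bounding $d(X_i, X) \leq \log\lambda_k$ and $d(Y, Y_j) \leq \log\lambdabar_k$ for $X \in [X_i, X_{i+1}]$ and $Y \in [Y_{j-1}, Y_j]$, which is where the $\tfrac{3}{2}\log(k+1)$ loss and the final constant $-6$ come from. To complete your argument you would need a version of the same interpolation step, applied to your triangle rather than the paper's, and you would need to decide which directed distance (or the max of both) your bound actually controls.
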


\begin{proof}
For ease of notation let $\lambda_k \defeq \stretchfactor(\phi_k)$ and $\lambdabar_k \defeq \stretchfactor(\phi_k^{-1})$.

Begin by noting that the Perron--Frobenious eigenvectors of the matrices $A$ and $B$ in the proof of Theorem~\ref{thrm:spectral_ratio_higher} are
\[ \left( \begin{array}{c}
\lambda_k^{N-1} \\
\lambda_k^{N-2} \\

\vdots \\
\lambda_k \\
1
\end{array} \right)
\inlineand
\left( \begin{array}{c}
1 \\
\lambdabar_k \\
\vdots \\
\lambdabar_k^{N-2} \\
\lambdabar_k^{N-1}
\end{array} \right)
\]
respectively.
Let $X_0$ and $Y_0$ be the metric graphs in Outer space corresponding to $N$--petalled roses with these lengths assigned.
Then $X_0$ and its images $X_i \defeq X_0 \cdot \phi_k^i$ lie on an axis of $\phi_k$.
Similarly, $Y_0$ and its images $Y_i \defeq Y_0 \cdot \phi_k^{-i}$ lie on an axis of $\phi_k^{-1}$.

We note that
\[ d(X_i, X_{i+1}) = \log(\lambda_k) \inlineand d(Y_i, Y_{i+1}) = \log(\lambdabar_k). \]
Furthermore, direct calculation of the lengths of candidate loops shows that $d(X_i, Y_i) \leq d(X_i, Y_j)$ for every $i$ and $j$ and that
\[ d(X_i, Y_i) = \log \left(\frac{\lambda_k^N - 1}{\lambda_k - 1} \cdot \frac{\lambdabar_k^N - \lambdabar_k^{N-1}}{\lambdabar_k^N - 1} \right) \geq (N-1) \log(k) - 4. \]

Therefore suppose that the distance between these two axes is realised by points $X$ and $Y$.
Without loss of generality we may assume that $X$ lies in the segment $[X_i, X_{i+1}]$ and that $Y$ lies in the segment $[Y_{j-1}, Y_j]$.
However, as shown in Figure~\ref{fig:axes},
\[ d(X, Y) \geq d(X_i, Y_j) - d(X_i, X) - d(Y, Y_j). \]
As $\lambda_k \leq k+1$ and $\lambdabar_k \leq \sqrt{k + 1}$, the right hand side of this inequality is bounded below by
\[ (N-1) \log(k) - 4 - \frac{3}{2} \log(k + 1) \geq \left(N-\frac{5}{2} \right) \log(k) - 6 \]
as required.
\end{proof}

\begin{figure}
\centering
\begin{tikzpicture}[scale=2,thick]

\tikzset{dot/.style={draw,shape=circle,fill=black,scale=0.4}}

\node (Xs) at (-2, 1) {};
\node [dot] (X) at (-1, 0.6) [label=below left:{$X_i$}] {};
\node [dot] (X0) at (0, 0.5) [label=below:{$X$}] {};
\node [dot] (Xp) at (1, 0.6) [label=below right:{$X_{i+1}$}] {};
\node (Xe) at (2, 1) [label=right:{$\phi_k$}] {};

\node (Ys) at (-2, -1) [label=left:{$\phi_k^{-1}$}] {};
\node [dot] (Y) at (-1, -0.6) [label=above left:{$Y_j$}] {};
\node [dot] (Y0) at (0, -0.5) [label=above:{$Y$}] {};
\node [dot] (Yp) at (1, -0.6) [label=above right:{$Y_{j-1}$}] {};
\node (Ye) at (2, -1) {};

\draw [->] plot [smooth] coordinates {(Xs) (X) (X0) (Xp) (Xe)};
\draw [->] plot [smooth] coordinates {(Ye) (Yp) (Y0) (Y) (Ys)};

\draw [dotted, ->] ([yshift=1mm]X.center) -- ([yshift=1mm]Xp.center) node [midway,above] {$\log(k)$};
\draw [dotted, ->] ([yshift=-1mm]Yp.center) -- ([yshift=-1mm]Y.center) node [midway,below] {$\frac{1}{2}\log(k)$};
\draw [dotted, ->] ([yshift=-1mm]X.center) -- ([yshift=1mm]Y.center) node [midway,left] {$(N - 1)\log(k) - 4$};

\end{tikzpicture}
\caption{Axes of $\phi_k$ and $\phi_k^{-1}$.}
\label{fig:axes}
\end{figure}
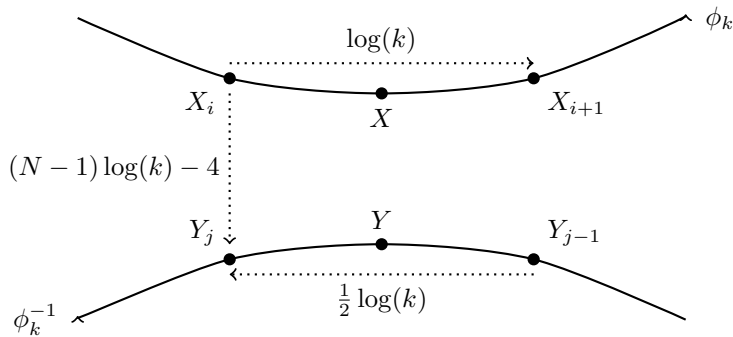

We note that for each $\phi_k$ the turns $\{a_i, a_j\}$ are all illegal.
Hence these are never lone axis automorphisms \cite[Theorem~3.9]{MosherPfaff}.
Similarly, for each $\phi_k^{-1}$ the turns $\{a_{i}^{-1}, a_{i+2}\}$ (where the indices wrap over so $a_{N+1} \defeq a_1$ and $a_{N+2} \defeq a_2$) are all illegal and so these are also never lone axis automorphisms.

\begin{question}
Are there axes of $\phi_k$ and $\phi_k^{-1}$ that remain within a uniformly bounded distance of each other?
\end{question}

\begin{remark}
From the eigenvectors above, we also see that as $k$ goes to infinity these axes enter thinner and thinner parts of Outer space.
\end{remark}

\begin{acknowledgements}
The author wishes to thank Yael Algom-Kfir and Christopher J. Leininger for many helpful discussions regarding this result.
\end{acknowledgements}

\bibliographystyle{plain}
\bibliography{bibliography}

\begin{thebibliography}{10}

\bibitem{BellSchleimer}
Mark~C. {Bell} and S.~{Schleimer}.
\newblock {Slow north-south dynamics on $\mathcal{PML}$}.
\newblock {\em ArXiv e-prints}, December 2015.

\bibitem{BestvinaHandel}
Mladen Bestvina and Michael Handel.
\newblock Train tracks and automorphisms of free groups.
\newblock {\em Ann. of Math. (2)}, 135(1):1--51, 1992.

\bibitem{FarbMargalit}
Benson Farb and Dan Margalit.
\newblock {\em A primer on mapping class groups}, volume~49 of {\em Princeton
  Mathematical Series}.
\newblock Princeton University Press, Princeton, NJ, 2012.

\bibitem{HandelMosher}
Michael Handel and Lee Mosher.
\newblock Parageometric outer automorphisms of free groups.
\newblock {\em Trans. Amer. Math. Soc.}, 359(7):3153--3183 (electronic), 2007.

\bibitem{HandelMosherParageometric}
Michael Handel and Lee Mosher.
\newblock Parageometric outer automorphisms of free groups.
\newblock {\em Trans. Amer. Math. Soc.}, 359(7):3153--3183 (electronic), 2007.

\bibitem{HandelMosherAxes}
Michael Handel and Lee Mosher.
\newblock Axes in outer space.
\newblock {\em Mem. Amer. Math. Soc.}, 213(1004):vi+104, 2011.

\bibitem{Lang}
Serge Lang.
\newblock {\em Fundamentals of {D}iophantine geometry}.
\newblock Springer-Verlag, New York, 1983.

\bibitem{LevittLustig}
Gilbert Levitt and Martin Lustig.
\newblock Irreducible automorphisms of {$F_n$} have north-south dynamics on
  compactified outer space.
\newblock {\em J. Inst. Math. Jussieu}, 2(1):59--72, 2003.

\bibitem{MosherPfaff}
L.~{Mosher} and C.~{Pfaff}.
\newblock {Lone Axes in Outer Space}.
\newblock {\em ArXiv e-prints}, November 2013.

\bibitem{Pfaff}
C.~{Pfaff}.
\newblock {Constructing and Classifying Fully Irreducible Outer Automorphisms
  of Free Groups}.
\newblock {\em ArXiv e-prints}, May 2012.

\bibitem{Vogtmann}
Karen Vogtmann.
\newblock Automorphisms of free groups and outer space.
\newblock In {\em Proceedings of the {C}onference on {G}eometric and
  {C}ombinatorial {G}roup {T}heory, {P}art {I} ({H}aifa, 2000)}, volume~94,
  pages 1--31, 2002.

\end{thebibliography}

\end{document}